\title{Jordan domains with a rectifiable arc in their boundary}
\author{V. Liontou and V. Nestoridis}
\date{}
\titleformat*{\section}{\normalsize\bfseries}
\titleformat*{\subsection}{\Large\bfseries}
\titleformat*{\section}{\normalsize\bfseries}
\titleformat*{\subsection}{\Large\bfseries}
\newtheorem{theorem}{Theorem}[chapter]
\numberwithin{theorem}{section}
\newtheorem{defi}[theorem]{Definition}
\newtheorem{lemma}[theorem]{Lemma}
 \newtheorem{corollary}{Corollary}[theorem]
\newtheorem{prop}[theorem]{Proposition}
\theoremstyle{definition}
\newcommand\tab[1][1cm]{\hspace*{#1}}
\begin{document}
\maketitle
To the memory of Professor Alain Dufresnoy.
\begin{abstract}
We show that if an open arc J of the boundary of a Jordan domain $\Omega$ is rectifiable, then the derivative $\Phi^{\prime}$ of the Riemann map $\Phi: D\rightarrow \Omega$ from the open unit disk D onto $\Omega$ behaves as an $H^1$ function when we approach the arc $\Phi^{-1}(J^{\prime})$,where $J^{\prime}$ is any compact  subarc of $J$. \\
\break
AMS Classification number:30H10.
\\
Key words and phrases: Riemann map, rectifiable curve, Jordan domain, Hardy class $H^1$, reflection principle.
\end{abstract}
\section{Introduction}
\tab In \cite{[6]} the Reflection principle has been used in order to prove that if a conformal collar, bounded by a Jordan arc $\delta$ has some nice properties, then any other conformal collar of $\delta$ on the same side has the same nice properties. We use the same method in order to generalize a well-known theorem about rectifiable Jordan curves, \cite{3}.\\
\begin{theorem}
Let $\tau$ be a Jordan curve and $\Phi: D\rightarrow \Omega$ be a Riemann map from the open unit disc D onto the interior $\Omega$ of $\tau$. Then 1 and 2 below are equivalent:\\
1) $\tau$ is rectifiable.\\
2)The derivative $\Phi^{\prime}$ belongs to the Hardy class $H^1$.\\
\end{theorem}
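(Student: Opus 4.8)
The plan is to combine Carath\'eodory's extension theorem with the F.\ and M.\ Riesz theorem. Since $\tau$ is a Jordan curve, Carath\'eodory's theorem guarantees that $\Phi$ extends to a homeomorphism of $\overline{D}$ onto $\overline{\Omega}$; in particular the boundary correspondence $g(t)=\Phi(e^{it})$ is a continuous parametrization of $\tau$. I would record at the outset the two elementary facts that drive everything: first, the image $\tau_r=\Phi(\{|z|=r\})$ has length $r\int_0^{2\pi}|\Phi'(re^{it})|\,dt$; and second, the integral means $M_1(r,\Phi')=\int_0^{2\pi}|\Phi'(re^{it})|\,dt$ are nondecreasing in $r$ because $|\Phi'|$ is subharmonic. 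Thus the membership $\Phi'\in H^1$ is precisely the statement $\sup_{r<1}M_1(r,\Phi')<\infty$.

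For the implication $2)\Rightarrow 1)$ I would invoke the classical fact that if $\Phi'\in H^1$ then its antiderivative $\Phi$ extends continuously to $\overline{D}$ and its boundary function is absolutely continuous, with $\frac{d}{dt}\Phi(e^{it})=ie^{it}\Phi'(e^{it})$ for almost every $t$. Consequently the length of $\tau$ equals $\int_0^{2\pi}|\tfrac{d}{dt}\Phi(e^{it})|\,dt=\int_0^{2\pi}|\Phi'(e^{it})|\,dt=\|\Phi'\|_{H^1}<\infty$, so $\tau$ is rectifiable.

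The substantial direction is $1)\Rightarrow 2)$, and here I would argue as follows. Because $\tau$ is rectifiable and $\Phi$ is a homeomorphism onto $\tau$, the boundary function $g(t)=\Phi(e^{it})$ has bounded variation, with total variation equal to the length of $\tau$. Since $\Phi$ is analytic and bounded in $D$, $g$ lies in the analytic Hardy class, so its Fourier coefficients $\hat{g}(n)$ vanish for $n<0$. Integration by parts then shows that the Fourier--Stieltjes coefficients of the measure $dg$ equal $in\hat{g}(n)$, which vanish for all $n\le 0$. By the F.\ and M.\ Riesz theorem a measure on the circle whose coefficients vanish on the nonpositive integers is absolutely continuous; hence $dg$ is absolutely continuous and $g$ is an absolutely continuous function. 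Writing $g'(t)=ie^{it}\Phi'(e^{it})\in L^1(\mathbb{T})$ and noting that $g'$ has spectrum contained in the positive integers, I conclude that $z\Phi'(z)$ is the analytic function with boundary values $g'$, hence $z\Phi'\in H^1$; dividing by $z$ (and using $M_1(r,\Phi')=r^{-1}M_1(r,z\Phi')$ together with monotonicity) gives $\Phi'\in H^1$.

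The main obstacle is exactly the passage from bounded variation to absolute continuity of the boundary correspondence; this is where the analyticity of $\Phi$ is essential and where the F.\ and M.\ Riesz theorem does the decisive work, ruling out any singular part of $dg$. Everything else is bookkeeping: the length formula for $\tau_r$, the monotonicity of the integral means, and the standard regularity of antiderivatives of $H^1$ functions. I would therefore expect the write-up to concentrate its effort on making the Fourier--Stieltjes computation and the application of F.\ and M.\ Riesz fully rigorous.
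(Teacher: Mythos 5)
Your proof is correct; note that the paper itself gives no proof of this theorem, quoting it as a known result from Duren \cite{3}. Your argument --- Carath\'eodory extension, bounded variation of the boundary correspondence $g(t)=\Phi(e^{it})$, vanishing of the Fourier--Stieltjes coefficients $\widehat{dg}(n)=in\hat g(n)$ for $n\le 0$, the F.\ and M.\ Riesz theorem to eliminate the singular part, and the identification of $iz\Phi'(z)$ as the analytic Poisson extension of $g'\in L^1$ --- is precisely the classical proof found in that cited reference (Theorems 3.11 and 3.12 of Duren), so it takes essentially the same route the paper relies on.
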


The generalization we obtain is that if $\tau$ is not rectifiable, but an open arc J of it has finite length, then the derivative $\Phi^{\prime}$ behaves as an $H^1$ function when we approach the compact subsets of the arc $\Phi^{-1}(J)\subset \{z\in\mathbb{C}:|z|=1\}$. In the proof we combine the statement of Theorem 1.1 with the Reflection principle, \cite{1}. \\
\tab The above suggests that the Hardy spaces $H^p$ on the disc can be generalized to larger spaces containing exactly all holomorphic  functions $f$ on the open unit disc D, such that $sup_{0<r<1}\int_{a}^{b}|f(re^{it})|^pdt< + \infty$ for some fixed a,b with $a<b<a+2\pi$. One can investigate what is the natural topology on that new space, if it is complete and Baire's theorem can be applied to yield some generic results as non-extendability results, and study properties of the functions, belonging to these spaces. What can be said for their zeros? All these will be investigated in future papers.

\section{Preleminaries}
\tab In order to state our main result we will need some already known results and the lemma 2.2 below.
\begin{defi}
Let $0<p<\infty$. A function $f(z)$ analytic in the unit disk $|z|<1$ is said to be of class $H^{p}$ if $$\frac{1}{2\pi} \int_{0}^{2\pi} |f(r e^{i\theta})|^{p}d\theta$$ remains bounded as $ r \rightarrow 1$.
\end{defi}
\break
The functions of the $H^p$ class share some useful properties such as: 
\begin{enumerate}[(a)]
\item If $U$ is a Jordan domain with rectifiable boundary and $\Phi: D \rightarrow U $ is a Riemann map, then $\Phi ^{\prime} \in H^{1}(D)$.
\item Let $f \in H^{p}$ ; then 
$ \int_{0}^{2\pi}log|f(re^{i\theta})|d\theta \geq log|f(0)|$ and \\$\int_{0}^{2\pi} log |f(re^{i\theta})|d\theta > - \infty $, provided that $f \neq 0$.
\item Let $f \in H^{p} $. Then $f(re^{i\theta})$ has non-tangential limits almost everywhere, on the unit circle, as $r\rightarrow 1^-$.
\end{enumerate}
\begin{lemma}
Let $\gamma $ be a Jordan curve and $J \subseteq \gamma$ a rectifiable, open arc and $J^{\prime} \subseteq J$  a compact arc. Then, $J^{\prime}$ can be extended to a rectifiable Jordan curve $\gamma^{\prime}$ and the interior of $\gamma^{\prime}$ is a subset of the interior of $\gamma$.
\begin{proof}
Let $I$ be a closed interval such that $\gamma(I) $ is a Jordan curve. Let (A,B) be an open interval such that $J:= \gamma ((A,B))$ and let $[a,b]$ be a compact subset of (A,B) such that $J^{\prime}= \gamma([a,b])$. There exists a $t_1$ in (A,B) and $\delta > 0$ such that $A<t_1-\delta<t_1+\delta<a$; thus , $\gamma([t_1-\delta, t_1+\delta])\cap J^{\prime}=\emptyset$ and $$\{\gamma(t_1) \}\cap \gamma(I/(t_1-\delta,t_1+\delta))=\emptyset.$$
Therefore, there exists $\eta>0$ such that $dist(\gamma(t_1), \gamma(I/(t_1-\delta,t_1+\delta)))=\eta>0$ since $I/(t_1-\delta,t_1+\delta)$ is compact and $\gamma$ is continuous.\\
 \tab 
 From the Jordan theorem there exists a sequence $(z_n)_{n\in \mathbb{N}}$ in the interior of the Jordan curve $\gamma$ such that $z_n\rightarrow \gamma(t_1)$. Therefore, there exists a $z_0$ in the interior of $\gamma$ and in the disc $B(\gamma(t_1),\eta/100)$ with center $\gamma(t_1)$ and radius $\eta/100$ and there also exists a $t_1^{\prime}$ in I $:
|z_0-\gamma(t_1^{\prime})|= min (dist(z_0,\gamma(I)))$.\\
\tab We claim that, $$\gamma(t_1^{\prime})\in \gamma([t_1-\delta,t_1+\delta]).$$Let us suppose that $\gamma(t_1)\notin\gamma([t_1-\delta,t_1+\delta])$ to arrive to a contradiction. Then we have 
$|z_0-\gamma(t_1^{\prime})|<\eta/100$ and $|\gamma(t_1)-z_0|<\eta/100$.
Therefore, $$|\gamma(t_1^{\prime})-\gamma(t_1)|<2\eta/100<\eta$$ which contradicts the fact that $dist(\gamma(t_1), \gamma(I/(t_1-\delta,t_1+\delta)))=\eta>0$.
Thus, $t_1^{\prime}\in [t_1-\delta,t_1+\delta]$ and $[z_0, \gamma(t_1^{\prime})]\cap \gamma(I)=\{\gamma(t_1^{\prime})\}$. \\
\tab Therefore, there exists an open segment inside the interior of $\gamma$, which joins $z_0$ with $\gamma(t_1^{\prime})$. We repeat the procedure for $b<t_1-\delta<t_1+\delta<B$ and will find $\gamma(t_2^{\prime})$ and $z_1$ in the interior  of G of $\gamma$, such that the open segment $(z_1, \gamma(t_2^{\prime}))$ is included in G.Therefore, there exists a polygonal line $W$ that connects $z_1$ and $z_0$ in G. It can easily be proven that this polygonal line can be  chosen to be simple. The Jordan curve $$\gamma[t_1^{\prime}, t_2^{\prime}]\cup [\gamma(t_1^{\prime}), z_0]\cup W\cup [z_1, \gamma(t_2^{\prime})]$$ has the desired properties.
This completes the proof of the lemma.

\end{proof}

\end{lemma}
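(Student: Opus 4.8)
The plan is to close up $J'$ into a Jordan curve by attaching, to the two free ends of a slightly enlarged rectifiable subarc of $J$, a pair of straight segments that reach into the interior $G$ of $\gamma$, and then joining the inner endpoints of those segments by a simple polygonal line that stays inside $G$. Since a subarc of the rectifiable arc $J$ is itself rectifiable, and since segments and polygonal lines have finite length, the resulting closed curve $\gamma'$ will automatically be rectifiable; the two things that require real work are (i) producing segments that meet $\gamma$ in exactly one point, so that $\gamma'$ is simple, and (ii) checking that the interior of $\gamma'$ sits inside $G$.

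First I would fix a parametrization: write $J=\gamma((A,B))$ and $J'=\gamma([a,b])$ with $[a,b]\subset(A,B)$, and choose parameters $t_1\in(A,a)$ and $t_2\in(b,B)$ together with a small $\delta>0$ so that the closed parameter intervals $[t_i-\delta,t_i+\delta]$ stay inside $(A,B)$ and miss $[a,b]$. The key local step is an accessibility argument. Because $\gamma$ is injective and continuous on the compact set $I\setminus(t_1-\delta,t_1+\delta)$, the number $\eta=\mathrm{dist}\bigl(\gamma(t_1),\gamma(I\setminus(t_1-\delta,t_1+\delta))\bigr)$ is strictly positive. Using the Jordan curve theorem, I can pick an interior point $z_0\in G$ with $|z_0-\gamma(t_1)|<\eta/100$, and let $\gamma(t_1')$ be a nearest point of $\gamma$ to $z_0$. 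A triangle-inequality estimate then forces $t_1'\in[t_1-\delta,t_1+\delta]$ (otherwise $|\gamma(t_1')-\gamma(t_1)|\ge\eta$, while it is at most $|z_0-\gamma(t_1')|+|z_0-\gamma(t_1)|<2\eta/100$), and the nearest-point property guarantees that the open segment $(z_0,\gamma(t_1'))$ avoids $\gamma$ entirely, hence lies in $G$. Repeating this near $t_2$ yields $z_1\in G$ and $\gamma(t_2')$ with $t_2'\in[t_2-\delta,t_2+\delta]$ and $(z_1,\gamma(t_2'))\subset G$.

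Next, since a Jordan domain is open and connected, hence polygonally connected, I can join $z_0$ to $z_1$ by a polygonal line $W\subset G$, and a standard pruning of self-intersections lets me take $W$ simple and, after a small perturbation, disjoint from the two segments except at $z_0,z_1$. I then set $\gamma'=\gamma([t_1',t_2'])\cup[\gamma(t_1'),z_0]\cup W\cup[z_1,\gamma(t_2')]$. The arc $\gamma([t_1',t_2'])$ is a subarc of $J$ containing $J'$ (as $t_1'<a$ and $t_2'>b$), so it is rectifiable and $J'\subseteq\gamma'$; adding the two segments and $W$ keeps the total length finite, and the pieces meet only at the shared endpoints $\gamma(t_1'),\gamma(t_2'),z_0,z_1$, so $\gamma'$ is a rectifiable Jordan curve.

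Finally, for the containment of interiors I would argue topologically. All of $\gamma'$ lies in $\overline{G}=\gamma\cup G$, and the unbounded complementary component $E$ of $\gamma$ (the exterior) is connected, unbounded, and disjoint from $\gamma'$; hence $E$ lies in the exterior of $\gamma'$, so the interior $G'$ of $\gamma'$ is disjoint from $E$. Since $G'$ is open and every point of $\gamma$ has points of $E$ arbitrarily near it, $G'$ can contain no point of $\gamma$ either, and therefore $G'\subseteq G$. I expect the main obstacle to be not the length bookkeeping but the verification that the assembled curve is genuinely simple — in particular arranging the polygonal line $W$ to avoid both the segments and the boundary arc — together with this last topological step showing that the newly enclosed region does not leak outside $G$.
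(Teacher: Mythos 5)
Your proposal follows essentially the same approach as the paper's own proof: the same nearest-point accessibility argument with the distance $\eta$ and the triangle-inequality estimate forcing $t_1'\in[t_1-\delta,t_1+\delta]$, the same segments $[z_0,\gamma(t_1')]$ and $[z_1,\gamma(t_2')]$, the same simple polygonal connector $W$, and the same assembled curve $\gamma([t_1',t_2'])\cup[\gamma(t_1'),z_0]\cup W\cup[z_1,\gamma(t_2')]$. Your argument is correct, and in fact you supply two details the paper leaves implicit --- perturbing $W$ to avoid the attached segments, and the final topological verification via the connected unbounded exterior of $\gamma$ that the interior of $\gamma'$ lies in $G$ --- where the paper simply asserts the curve ``has the desired properties.''
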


\section{Main Result}
\tab According to a well known theorem of  Osgood - Caratheodory, \cite{5}, every Riemann map, from the open unit disc to the interior of the Jordan curve, extends to a homeomorphism between the closed unit disc and the closure of the Jordan domain. Our main result is the following.
\begin{theorem}
\tab Let $\gamma$ be a Jordan curve and $J\subseteq\gamma$ a rectifiable open arc. Let also $J^{\prime}\subseteq J$ be a compact arc. Let  G be the interior of $\gamma$. Let $\Phi:D\rightarrow G$ be a conformal mapping from the open unit disk D onto G and let $J^{\prime}= \{ \Phi(e^{it}):a\leq t \leq b\}$. Then  $$\int_{a}^{b}|\Phi^{\prime}(r_1e^{it})-\Phi^{\prime}(r_2e^{it})|dt\rightarrow0,$$ as  $r_1,r_2 \rightarrow 1^-$.
\begin{proof}
According to Lemma 2.2 the compact arc $J^{\prime}$ can be extended to a rectifiable Jordan curve $\gamma^{\prime}$ defining a Jordan domain $G^{\prime}\subset G$.\\ \tab Let $f:D\rightarrow G^{\prime}$ be a Riemann map. Thus, $f^{\prime}$ is of class $H^1$ on D. We consider the function $h:D\rightarrow D$, where $h=\Phi^{-1}\circ f$ maps the arc $f^{-1}(J^{\prime})\subseteq \mathbb{T}$ onto the arc $\{e^{it}: a\leq t \leq b\}$, where $\mathbb{T}$ is the unit circle. According to the Reflection Principle the function h is injective and holomorphic on a compact neighbourhood $\overline{V}$ of the compact arc $f^{-1}(J^{\prime})$. Therefore, on $\overline{V}$ the derivative $h^{\prime}$ satisfy $0<\delta<|h^{\prime}(z)|<M< +\infty$ and $h$ (and all its derivatives) are uniformly continuous. We have $\Phi =f\circ h^{-1}= f\circ g$, where $g=h^{-1}$ maps a compact neighbourhood $\overline{W}$ of $\{e^{it}:a\leq t \leq b\}$ biholomorphically on $\overline{V}$ and $0<\overline{\delta}<
 |g^{\prime}(z)|<\overline{M}< + \infty $ on $\overline{W}$ and g (as well as all its derivatives ) are uniformly continuous.
Therefore, $\Phi^{\prime}=f^{\prime}\circ g \cdot g^{\prime}$.\\
\tab There exists $r_0<1$ so that for every $t\in [a,b]$ and every $r\in [r_0,1]$ it holds $re^{it}\in \overline{W}$.
Let $r_1,r_2 \in [r_0,1)$. Then $$|\Phi ^{\prime}(r_1 e^{it})- \Phi ^{\prime}(r_2e^{it})|=$$ $$= |f^{\prime}(g(r_1e^{it}))\cdot g^{\prime}(r_1e^{it})- f^{\prime}(g(r_2e^{it}))g^{\prime}(r_2e^{it})|$$ $$=|f^{\prime}(g(r_1e^{it}))g^{\prime}(r_1e^{it})- f^{\prime}(g(r_2e^{it}))\cdot g^{\prime}(r_1e^{it})$$ $$+f^{\prime}(g(r_2e^{it}))\cdot g^{\prime}(r_1e^{it})-f^{\prime}(g(r_2e^{it}))g^{\prime}(r_2e^{it}) |$$ $$\leq |f^{\prime}(g(r_1e^{it}))-f^{\prime}(g(r_2e^{it}))||g^{\prime}(r_1e^{it})|$$ $$+ |f^{\prime}(g(r_2e^{it}))||g^{\prime}(r_1e^{it})- g^{\prime}(r_2e^{it})|.$$
\break
We also have $|g^{\prime}(r_1e^{it})|\leq \overline{M}$ and $|g^{\prime}(r_1e^{it})- g^{\prime}(r_2e^{it})|\leq \epsilon $ provided that $r_1,r_2 \in [\overline{r_0},1)$, where $\overline{r_0}=\overline{r_0}(\epsilon)\in [r_0,1)$ is given by the uniform continuity of $g^{\prime}$ on $\overline{W}$.
It follows that $$
 \int_{a}^{b}|\Phi^{\prime}(r_1e^{it})-\Phi^{\prime}(r_2e^{it})|dt \leq $$ $$\leq M\int_{a}^{b}|f^{\prime}(g(r_1e^{it}))-f^{\prime}(g(r_2e^{it}))|dt + \epsilon \int_{a}^{b}|f^{\prime}(g(r_2e^{it}))|dt $$
 
 It suffices to show that 
 $$ I_{(r_1,r_2)}= \int_{a}^{b}|f^{\prime}(g(r_1e^{it}))-f^{\prime}(g(r_2e^{it}))|dt $$ is  close to 0 provided that $r_1,r_2$ are sufficiently close to 1 and that $\int_{a}^{b}|f^{\prime}(g(r_2e^{it}))|dt$ stays bounded as $r\rightarrow 1^{-}$.\\
 
 Since g is continuous on $\overline{W}\supset\{e^{it}:a\leq t\leq b \}$ and $f^{\prime}$ has almost everywhere non-tangential limits, if we show that for r close enough to 1 ($r<1$) the complex number $g(re^{it})$ belongs to the angle  $\Gamma_{t,\pi/2}$ with vertex $g(e^{it})$ symmetric with respect to $[0,g(e^{it})]$ with opening $\pi/2$, then we obtain that \(\lim_{r_1,r_2\rightarrow 1^-}|f^{\prime}(g(r_1e^{it}))-f^{\prime}(g(r_2e^{it}))|=0\) almost for every t in $[a,b]$.\\
 \tab Suppose for the moment that we have proven the claim that there exists $\delta\in [\overline{r_0},1)$ so that for all $r\in [\delta,1)$ and all $t\in [a,b]$ we have $g(re^{it})\in \Gamma_{t,\pi/2}$. Then in order to prove that $\lim_{r_1,r_2\rightarrow 1^-}I=0$ we will apply the Dominated Convergence theorem.\\
 \tab Let u denote the non-tangential maximal function $$u(t)=sup\{|f^{\prime}(z)|: z\in \Gamma_{t,\pi/2}, |g(e^{it})-z|<1/2\}.$$ Since $f^{\prime}$ belongs to the Hardy class $H^1$, according to \cite{2}, it follows that u is integrable on $[a,a+2\pi]\supset [a,b]$, We also have $$|f^{\prime}(g(r_1e^{it}))-f^{\prime}(g(r_2e^{it}))|\leq 2u(t).$$ Therefore, $\lim_{r_1,r_2\rightarrow 1^-}I_(r_1,r_2)=0$ and $$\int_{a}^{b}|f^{\prime}(g(r_2e^{it}))|dt\leq \int_{a}^{b}u(t)dt\leq \int_{0}^{2\pi}u(t)dt\leq +\infty $$for all $r_2<1$ close enough to $1$\\
 \tab Now we prove the claim. 
 We have $g(e^{i\theta})=e^{iw(\theta)}, w(\theta)\in \mathbb{R}$. In order to prove that $g(re^{i\theta})\in \Gamma_{\theta,\pi/2}$ it suffices to prove that $|Arg[1-\dfrac{g(re^{i\theta})}{g(e^{i\theta})}]|< \pi/4$. But $$1-\dfrac{g(re^{i\theta})}{g(e^{i\theta})}= \dfrac{g(e^{i\theta})- g(re^{i\theta})}{g(e^{i\theta})}= \int_{[re^{i\theta},e^{i\theta})}^{}\dfrac{g^{\prime}(y)}{g(e^{i\theta})}dy= \int_{r}^{1}\dfrac{g^{\prime}(te^{i\theta})e^{i\theta}}{g(e^{i\theta})}dt$$
Since $g(e^{i\theta})=e^{iw(\theta)}, w(\theta)\in \mathbb{R}$ it follows that $$\dfrac{d}{d\theta}g(e^{i\theta})= g^{\prime}(e^{i\theta})ie^{i\theta}=e^{iw(\theta)}iw^{\prime}(\theta)= g(e^{i\theta})iw^{\prime}(\theta).$$ Thus, $$\dfrac{g^{\prime}(e^{i\theta})e^{i\theta}}{g(e^{i\theta})}= w^{\prime}(\theta)\in \mathbb{R}-\{0\}$$
\tab By continuity of $w^{\prime}$ with respect to $\theta$, we have $w^{\prime}(\theta)\in [c,k]$, for every $\theta\in [a,b]$ or $w^{\prime}(\theta)\in [-k,-c]$ for every $\theta\in [a,b]$, where $0<c<k<+\infty$. The later case is excluded because of the following reason: the function g is a conformal equivalence between two Jordan domains $G^{\prime}$ and $G^{\prime\prime}$ included in D and the boundary of $G^{\prime}$ contains the arc $\{e^{i\theta}:t\in [a,b]\}$ and $g(e^{i\theta})=e^{iw(\theta)}, w(\theta)\in \mathbb{R}$ for all $\theta\in [a,b] $. Let $z_0\in G^{\prime}$; then $g(z_0)\in G^{\prime\prime}\subset D$ and according to the argument principle $Ind(g_{|_{\partial G^{\prime}}}, g(z_0))=1$. If $w^{\prime}(\theta)<0$ then, the homeomorphism $g_{|_{\partial G^{\prime}}}:\partial G^{\prime}\rightarrow \partial G^{\prime\prime}$ turns in such a sense so we should have $Ind(g_{|_{\partial   
 G^{\prime}}}, g(z_0))=-1\neq 1$ impossible. Therefore, $w^{\prime}(\theta)\in [c,k]$ for every $\theta\in [a,b]$ with $0<c<k<+\infty$.
 Thus,$$Arg[1-\dfrac{g(re^{i\theta})}{g(e^{i\theta})}]= Arg \int_{r}^{1}\dfrac{g^{\prime}(te^{i\theta})e^{i\theta}}{g(e^{i\theta})}dt$$
 
 $$= Arg \dfrac{1}{1-r}\int_{r}^{1}\dfrac{g^{\prime}(te^{i\theta})e^{i\theta}}{g(e^{i\theta})}dt$$
 But $lim_{r\rightarrow 1^-}\dfrac{g^{\prime}(re^{i\theta})e^{i\theta}}{g(e^{i\theta})}= \dfrac{g^{\prime}(e^{i\theta})e^{i\theta}}{g(e^{i\theta})}= w^{\prime}(\theta)\in [c,k]$
 for $0<c<k<+ \infty$ and the limit is uniform for $\theta \in [a,b]$. Thus, there exists $\delta\in [\overline{r_0},1)$ so that for every $r\in [\delta,1)$ the quantity $\dfrac{g^{\prime}(re^{i\theta})e^{i\theta}}{g(e^{i\theta})}$ belongs to the convex angle $\{x+iy: 0<x, |y|\leq x\}$ which has vertex 0 and opening $\pi/2$ and is symmetric to the positive x-axis. Its average $\dfrac{1}{1-r}\int_{r}^{1}\dfrac{g^{\prime}(re^{i\theta})e^{i\theta}}{g(e^{i\theta})}dr$ will belong to the same convex angle; therefore, 
 $$|Arg[1-\dfrac{g(re^{i\theta})}{g(e^{i\theta})}]|= |Arg\dfrac{1}{1-r}\int_{r}^{1}\dfrac{g^{\prime}(re^{i\theta})e^{i\theta}}{g(e^{i\theta})}dr|<\pi/4$$
 and the claim is verified. This completes the proof.

\end{proof}
\end{theorem}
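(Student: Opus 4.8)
The plan is to reduce the statement to the already-known rectifiable case (property (a)) by comparing $\Phi$ with a second conformal map onto a smaller Jordan domain that shares the arc $J'$ in its boundary. First I would apply Lemma 2.2 to extend the compact arc $J'$ to a rectifiable Jordan curve $\gamma'$ whose interior $G'$ is contained in $G$, and choose a Riemann map $f : D \to G'$; property (a) then gives $f' \in H^1(D)$. The key structural observation is that $\partial G$ and $\partial G'$ coincide along the common arc $J'$, so by the Reflection Principle the composition $h = \Phi^{-1}\circ f$ extends holomorphically and injectively across the circular boundary arc $f^{-1}(J')$. Hence on a compact neighborhood of that arc one has bounds $0 < \delta \le |h'| \le M$ with $h$ and its derivatives uniformly continuous. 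Setting $g = h^{-1}$ yields $\Phi = f \circ g$, and therefore $\Phi' = (f'\circ g)\cdot g'$ on a one-sided neighborhood $\overline{W}$ of $\{e^{it} : a \le t \le b\}$, with analogous bounds $0 < \overline\delta \le |g'| \le \overline{M}$ and $g'$ uniformly continuous.

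With this factorization in hand I would estimate $|\Phi'(r_1 e^{it}) - \Phi'(r_2 e^{it})|$ by the add-and-subtract trick, bounding it by $|f'(g(r_1 e^{it})) - f'(g(r_2 e^{it}))|\,|g'(r_1 e^{it})| + |f'(g(r_2 e^{it}))|\,|g'(r_1 e^{it}) - g'(r_2 e^{it})|$. The second term is harmless: $|g'| \le \overline M$, the factor $|g'(r_1 e^{it}) - g'(r_2 e^{it})|$ is made uniformly small by uniform continuity of $g'$, and $\int_a^b |f'(g(r_2 e^{it}))|\,dt$ will be shown bounded. So the whole statement reduces to showing $I_{(r_1,r_2)} = \int_a^b |f'(g(r_1 e^{it})) - f'(g(r_2 e^{it}))|\,dt \to 0$, for which I would invoke the Dominated Convergence theorem: the integrand is dominated by $2u(t)$, where $u$ is the non-tangential maximal function of $f'$, integrable over $[a,a+2\pi]$ because $f' \in H^1$ (Hardy--Littlewood maximal theorem), and the pointwise a.e. limit is $0$ by the existence of non-tangential boundary values of $f'$ (property (c))---provided the curve $r \mapsto g(re^{it})$ approaches $g(e^{it})$ inside a Stolz angle.

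This last point is the heart of the argument and the main obstacle. Writing $g(e^{i\theta}) = e^{iw(\theta)}$ with $w$ real, I would establish $g(re^{i\theta}) \in \Gamma_{\theta,\pi/2}$ by proving $\big|\mathrm{Arg}[1 - g(re^{i\theta})/g(e^{i\theta})]\big| < \pi/4$. Expressing the difference as $\int_r^1 g'(te^{i\theta})e^{i\theta}/g(e^{i\theta})\,dt$, the integrand tends, uniformly in $\theta$, to $g'(e^{i\theta})e^{i\theta}/g(e^{i\theta}) = w'(\theta)$ as $t \to 1^-$. The delicate steps are that $w'(\theta)$ is real, nonzero, and of constant sign on $[a,b]$, and that this sign is positive: I would pin it down by an orientation argument, since $g$ is a conformal equivalence of Jordan domains inside $D$ preserving boundary orientation, so the argument principle forces $\mathrm{Ind}(g|_{\partial G'}, g(z_0)) = +1$, excluding $w' < 0$. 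Compactness of $[a,b]$ and continuity of $w'$ then give $w'(\theta) \in [c,k]$ with $0 < c < k < \infty$, so for $r$ near $1$ the integrand---and hence its average---lies in the open cone $\{x+iy : x > 0,\ |y| \le x\}$, yielding the required bound on the argument. I expect the sign determination and the uniform-in-$\theta$ control of the boundary limit to be the points requiring the most care, as they rest on the one-sided reflection picture and on $J'$ being a compact subarc.
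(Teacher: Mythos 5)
Your proposal follows essentially the same route as the paper's own proof, step for step: the Lemma 2.2 extension to a rectifiable Jordan curve, the reflection-principle factorization $\Phi' = (f'\circ g)\cdot g'$, the add-and-subtract estimate, dominated convergence via the non-tangential maximal function of $f' \in H^1$, and the Stolz-angle claim settled by writing $g(e^{i\theta}) = e^{iw(\theta)}$ and fixing the sign of $w'$ through the argument principle. The only (immaterial) divergence is attribution: you cite the Hardy--Littlewood maximal theorem for the integrability of the maximal function where the paper cites Burkholder--Gundy--Silverstein, and your citation is if anything the more apt one.
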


\begin{corollary}
For the conformal mapping $\Phi: D\rightarrow G $ in the theorem 3.1 it holds that:
\begin{enumerate}
\item $\int_{a}^{b}|\Phi^{\prime}(re^{it})|dt$ is bounded for $0<r<1$.
\item $\Phi^{\prime}$ has non-tangential limits almost everywhere on $\{e^{it}:a<t<b\}$ which are denoted as $\Phi^{\prime}(e^{it})$ and $\Phi^{\prime}(e^{it})\neq 0$ almost everywhere.
\item $\Phi^{\prime}(e^{it})|_{(a,b)}$ is integrable and $\int_{a}^{b}|\Phi^{\prime}(re^{it})- \Phi^{\prime}(e^{it})|dt\rightarrow 0$ as $r\rightarrow 1^-$.
\item Length of $J^{\prime}= \int_{a}^{b}|\Phi^{\prime}(e^{it})|dt= \lim_{r\rightarrow 1 }\int_{a}^{b}|\Phi^{\prime}(re^{it})|dt=\lim_{r\rightarrow 1^-}$ length of $\Phi\{re^{iu}: a\leq u \leq b\} $.

\end{enumerate}

\end{corollary}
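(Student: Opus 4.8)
The plan is to read all four assertions off Theorem 3.1 together with the factorization $\Phi = f\circ g$ and the estimates supplied in its proof, where $f\colon D\to G'$ is the Riemann map with $f'\in H^1$ and $g=h^{-1}$ is biholomorphic on the compact neighbourhood $\overline{W}$ of $\{e^{it}:a\le t\le b\}$ with $0<\overline{\delta}<|g'|<\overline{M}$. Throughout I would use $\Phi'(re^{it})=f'(g(re^{it}))\,g'(re^{it})$ and the integrable non-tangential maximal function $u(t)$ introduced in the proof of Theorem 3.1.

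For part (1), for $r\ge\delta$ the bound $|\Phi'(re^{it})|\le\overline{M}\,u(t)$ gives $\int_a^b|\Phi'(re^{it})|\,dt\le\overline{M}\int_a^b u(t)\,dt<\infty$, while for $0<r\le\delta$ the function $\Phi'$ is continuous, hence bounded, on the compact disc $\{|z|\le\delta\}$, so the integral is trivially bounded there; combining the two ranges yields the uniform bound. For part (2), $f'$ has non-tangential limits a.e.\ by property (c), and since $g$ is conformal at the boundary with $g'(e^{it})\ne 0$ it carries a Stolz angle at $e^{it}$ into a Stolz-type region at $g(e^{it})$ (the claim in Theorem 3.1 already exhibits $g(re^{it})$ inside such an angle), so $\Phi'(re^{it})$ converges a.e.\ to $\Phi'(e^{it})=f'(g(e^{it}))\,g'(e^{it})$. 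For the non-vanishing, I would invoke property (b): since $f'\in H^1$ and $f'\not\equiv 0$, $\log|f'|$ is integrable on the circle, so $f'(e^{i\phi})\ne 0$ a.e.; transporting this through the boundary diffeomorphism $t\mapsto w(t)$, whose derivative lies in $[c,k]$ and hence preserves null sets, and multiplying by the non-vanishing $g'(e^{it})$, gives $\Phi'(e^{it})\ne 0$ a.e.

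Part (3) is almost immediate: Theorem 3.1 asserts precisely that $r\mapsto\Phi'(re^{it})$ is Cauchy in $L^1([a,b])$ as $r\to 1^-$, so by completeness of $L^1$ there is a limit $F\in L^1$; passing to a subsequence $r_n\to 1^-$ along which $L^1$ convergence forces a.e.\ convergence, and comparing with the pointwise limit of part (2), identifies $F$ with $\Phi'(e^{it})$. Hence $\Phi'(e^{it})\in L^1([a,b])$ and $\int_a^b|\Phi'(re^{it})-\Phi'(e^{it})|\,dt\to 0$.

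Part (4) requires the most care. The length of the image curve $\Phi(\{re^{iu}:a\le u\le b\})$ equals $r\int_a^b|\Phi'(re^{it})|\,dt$, and by the reverse triangle inequality the $L^1$ convergence of part (3) sends $\int_a^b|\Phi'(re^{it})|\,dt\to\int_a^b|\Phi'(e^{it})|\,dt$, so the length of the approximating curves converges to $\int_a^b|\Phi'(e^{it})|\,dt$ as $r\to 1^-$. To identify this quantity with the length of $J'$ itself I would integrate $\frac{d}{dt}\Phi(re^{it})=ire^{it}\Phi'(re^{it})$ to obtain $\Phi(re^{it_2})-\Phi(re^{it_1})=\int_{t_1}^{t_2}ire^{it}\Phi'(re^{it})\,dt$ and let $r\to 1^-$: the left-hand side converges by the Osgood--Carathéodory extension of $\Phi$ to $\overline{D}$, the right-hand side by part (3). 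This shows $t\mapsto\Phi(e^{it})$ is absolutely continuous with derivative $ie^{it}\Phi'(e^{it})$, whence the length of $J'$ equals $\int_a^b|\Phi'(e^{it})|\,dt$. I expect this last passage to be the main obstacle, since it is where one must upgrade the $L^1$ boundary data into a genuine rectifiability statement for the boundary arc; the absolute-continuity argument above is what delivers equality of lengths rather than the mere lower-semicontinuity inequality one gets for free from uniform convergence of the curves.
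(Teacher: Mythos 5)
Your proposal is correct, and for parts (2) and (3) it follows the paper's proof essentially verbatim: non-tangential limits of $f'$ from $f'\in H^1$ together with the holomorphic extension of $g$ across the arc, property (b) for the non-vanishing (you are in fact slightly more careful than the paper, noting that the diffeomorphism $w$ with $w'\in[c,k]$ preserves null sets), then $L^1$-completeness plus an a.e.-convergent subsequence to identify the limit. Your part (1) is a minor variant: the paper deduces boundedness from the reverse triangle inequality applied to the $L^1$ limit furnished by Theorem 3.1, whereas you dominate $|\Phi'(re^{it})|\leq \overline{M}\,u(t)$ directly by the integrable maximal function; this works, modulo the small point that the constraint $|g(e^{it})-z|<1/2$ in the definition of $u$ forces you to take $r$ close enough to $1$ that $|g(re^{it})-g(e^{it})|<1/2$, which the uniform continuity (indeed the Lipschitz bound $\overline{M}(1-r)$) of $g$ on $\overline{W}$ supplies.

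The genuine divergence is part (4). The paper gives two proofs: the first quotes the global theorem $\mathrm{length}(J')=\int_A^B|f'(e^{it})|\,dt$ for the rectifiable curve $\gamma'$ (Duren) and transfers it to $\Phi$ through the boundary change of variable $h(e^{it})=e^{iu}$; the second decomposes the arc measure of the bounded-variation boundary function as $|\Phi'(e^{iu})|\,du+dv$ with $dv\geq 0$ singular, and squeezes $dv=0$ against the lower semicontinuity of length (Proposition 4.1). You instead prove absolute continuity of $t\mapsto\Phi(e^{it})$ on $[a,b]$ directly, by integrating $\frac{d}{dt}\Phi(re^{it})=ire^{it}\Phi'(re^{it})$ over $[t_1,t_2]$ and letting $r\to 1^-$, with Osgood--Carath\'eodory handling the left side and the $L^1$ convergence of part (3) the right. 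This is Duren's classical argument for Theorem 1.1 localized to the arc, and it buys you something real: it avoids both the change-of-variable bookkeeping and the singular-measure decomposition, and it yields the strictly stronger conclusion that the boundary parametrization has no singular part on $(a,b)$ --- a fact the paper's second proof obtains only a posteriori from the equality of lengths. From absolute continuity, the identity $\mathrm{length}(J')=\int_a^b|\Phi'(e^{it})|\,dt$ and the limit statements follow at once (your observation that $\mathrm{length}\,\Phi\{re^{iu}:a\leq u\leq b\}=r\int_a^b|\Phi'(re^{iu})|\,du$, with the factor $r$ absorbed in the limit, is correct and is glossed over in the paper). All steps check out; your route for (4) is self-contained where the paper's first proof leans on the global rectifiable-curve theorem applied to $\gamma'$.
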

\begin{proof}
\begin{enumerate}
\item \tab From Theorem 3.1, the family $t\rightarrow \Phi^{\prime}(re^{it})$ is Cauchy $L^{\prime}(a,b)$, as $r\rightarrow 1^{-}$. Therefore, there exists the limit $g$ in $L^{\prime}(a,b)$ such that 
$$\int_{a}^{b}|\Phi^{\prime}(re^{it})- g(e^{it})|dt\rightarrow 0.$$
We have $$\int_{a}^{b}|\Phi^{\prime}(re^{it})-g(e^{it})|dt \geq|\int_{a}^{b}|\Phi^{\prime}(re^{it})|dt - \int_{a}^{b}|g(e^{it})|dt|.$$

Therefore, for every $\epsilon>0$ there exists a $r_0>0$, such that for every $r>r_0$ it holds that $$|\int_{a}^{b}|\Phi^{\prime}(re^{it})|dt- \int_{a}^{b}|g(e^{it})|dt|< \epsilon .$$
Since $\int_{a}^{b}|g(e^{it})|dt< +\infty$, it follows that $\int_{a}^{b}|\Phi^{\prime}(re^{it})|dt$ is bounded as $r\rightarrow 1$. 
This completes the proof of 1.

\item \tab We use the notation of Theorem 1. Then $\Phi^{\prime}=f^{\prime}(h)h^{\prime}$, where $h:D \rightarrow D $. Since $f^{\prime}\in H^1(D)$ there exists the non-tangetial limit a.e. on $\partial D$ and therefore on $J^{\prime}$.\\
\tab On the other hand, the function h is holomorphic on D and can be extended holomorphically on a neighbourhood of $J^{\prime}$. Therefore, h and $h^{\prime}$ have non-tangetial limits a.e. on $\{e^{i\theta}:a<\theta<b\}$. Thus, $\Phi^{\prime}= f^{\prime}(h)h^{\prime}$ has non-tangetial limits a.e. on $\{e^{i\theta}, a<\theta<b\}$.\\
\tab Now, $f^{\prime}$ is in $H^{1}$ and $f^{\prime}\neq 0$. Thus, $f^{\prime}(h(e^{i\theta}))\neq 0$ a.e. on (a,b). Also $h^{\prime}(e^{it})\neq 0$ for all $t\in (a,b)$ because h is injective and holomorphic on a compact neighbourhood of $J^{\prime}$. Thus, $\Phi ^{\prime}(e^{it})\neq 0$ almost everywhere on (a,b). This completes the proof of 2.
\item Since $\Phi^{\prime}(re^{it})$ is Cauchy in $L_1$ as $r\rightarrow 1^-$, there exists $g(e^{it}):= \lim _{r\rightarrow1}\Phi^{\prime}(re^{it})$ in $L_1$. There exists a sequence $r_{k_n}$, \cite{7}, such that $\Phi^{\prime}(r_{k_n}e^{it})\rightarrow g(e^{it})$ a.e. But $\Phi ^{\prime}(re^{it})\rightarrow \Phi^{\prime}(e^{it})$ a.e. on $\{e^{it}, a<t<b\}$ non-tangetially. Therefore $g=\Phi^{\prime}(e^{it})$ a.e.
Since $g\in L_1$ and $g=\Phi^{\prime}(e^{it})$ a.e., it follows that $\Phi^{\prime}\in L_1$.
This completes the proof of 3.

\item Let A, B be such that $J^{\prime}=\{f(e^{it}): A\leq t \leq B\}$. Since $f^{\prime}\in H^1(D)$ we have length of $J^{\prime}= \int_{A}^{B}|f^{\prime}(e^{it})|dt$, \cite{3}. But $f=\Phi\circ h$, therefore $f^{\prime}=\Phi^{\prime}\circ h\cdot h^{\prime}$. Thus, length $J^{\prime}=\int_{A}^{B}|\Phi^{\prime}(h(e^{it}))|h^{\prime}(e^{it})|dt$.We do the diffeomorphic change of variable $h(e^{it})= e^{iu}$ that is $$e^{it}=h^{-1}(e^{iu}) $$ which implies $$ie^{it}dt={(h^{-1})}^{\prime}(ie^{iu})\cdot ie^u du $$ and $$dt=|(h^{-1})^{\prime}(e^{iu})|du=\dfrac{1}{|h^{\prime}(e^{it})|}du.$$ \tab According to \cite[2.6,pg. 74]{4}, for this change of variable for integrable functions we find length $J^{\prime}=\int_{a}^{b}|\Phi^{\prime}(e^{iu})|du$.\\
Using part 3, we take 
$$\int_{a}^{b}|\Phi^{\prime}(e^{iu})|du=$$ $$=\lim_{r\rightarrow 1^-}\int_{a}^{b}|\Phi^{\prime}(re^{iu})|du=$$ $$=\lim_{r\rightarrow 1^-}length\{\Phi(re^{iu}): a\leq u \leq b\}.$$
The result easily follows. This completes the proof of part 4 and of the whole Corollary.\\
\break
However, we will give a second alternative proof for part 4.\\
\tab Since $\Phi(e^{it})$ is of bounded variation on $[a,b]$, the arc measure on $J^{\prime}$ is $|\Phi^{\prime}(e^{iu})|du+dv$, where $dv$ is a singular non negative measure; it follows that
$$length  J^{\prime}\geq \int_{a}^{b}|\Phi^{\prime}(e^{iu})|du, \cite{4}. $$
We notice that, combining the relation $\Phi^{\prime}=f^{\prime}\circ h \cdot h^{\prime}$ with the fact that $f^{\prime}\in H^{1}$, we easily conclude that the non-tangential limits of $\Phi^{\prime}$ on $\{ e^{iu}:a\leq u \leq b\}$ coincide almost everywhere with the derivative $\dfrac{d\Phi}{de^{iu}}(e^{iu})$ computed for the restriction of $\Phi$ on $\{e^{iu} :a\leq u\leq b\}$, which exists almost everywhere on $\{e^{iu}:a\leq u \leq b\}$ , because $J^{\prime}$ is rectifiable and $\Phi(e^{iu})$ is of bounded variation on $[a,b]$
According to part 3, we have $$\int_{a}^{b}|\Phi^{\prime}(e^{iu})|du= $$ $$=\lim_{r\rightarrow 1^-}\int_{a}^{b}|\Phi^{\prime}(re^{iu})|du=$$ $$=\lim_{r\rightarrow1^-}length \{\Phi(re^{iu}): a\leq u\leq b\}$$

Since $\Phi(re^{iu})\rightarrow \Phi(e^{iu})$ as $r\rightarrow1^-$ we have $$ lengthJ^{\prime}=length \{ \Phi(e^{iu}): a\leq u \leq b\}\leq$$ $$\leq \liminf_{r\rightarrow1^-}length\{\Phi(e^{iu}: a\leq u \leq b)\}$$  
(see Prop.  4.1 below). Now the result easily follows. The proof is complete.

\end{enumerate}

\end{proof}
\section{Further results }
We have seen that $\lim_{r\rightarrow 1^{-}}$ length $\Phi\{re^{it}:a\leq t \leq b\}=$ length of $\Phi\{e^{it}:a\leq t \leq b\}$ provided that for some $a^{\prime}, b^{\prime}:a^{\prime}<a<b<b^{\prime}$ the length of $\Phi\{e^{it}:a^{\prime}\leq t \leq b ^{\prime}\}$ is finite. Composing $\Phi$ with an automorphism of the open unit disc $w(z)=c\dfrac{z-\gamma}{1-\overline{\gamma}z}, |c|=1, |\gamma|<1$ we can obtain similar results of other families of curves converging to $\Phi\{e^{it}:a\leq t\leq b\}$. We will not insist towards this direction.
For any arc $\{e^{it}:A\leq t\leq B\}, A<B< A+ 2\pi$, we have the following:
\begin{prop}
Under the above assumptions and notation we have   the following inequality.\\
length of $\Phi\{e^{it}: A \leq t \leq B\}\leq \liminf_{r\rightarrow 1^{-}}$ of length $\Phi \{re^{it}: A \leq t \leq B\}$. 
\end{prop}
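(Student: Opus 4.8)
The plan is to prove this as an instance of the classical lower semicontinuity of arc length under pointwise convergence of curves. First I would recall that the length of a curve $\sigma:[A,B]\to\mathbb{C}$ is by definition
$$\textrm{length}(\sigma)=\sup_{P}\sum_{j=1}^{n}|\sigma(t_j)-\sigma(t_{j-1})|,$$
the supremum being taken over all partitions $P:A=t_0<t_1<\cdots<t_n=B$ of $[A,B]$.

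Next I would record the one analytic input. By the Osgood--Caratheodory theorem quoted at the start of Section 3, the Riemann map $\Phi$ extends to a homeomorphism of $\overline{D}$ onto $\overline{G}$, hence is uniformly continuous on the compact set $\overline{D}$. Consequently $\Phi(re^{it})\to\Phi(e^{it})$ as $r\to1^-$, uniformly in $t\in[A,B]$; in particular the convergence holds pointwise at every point of $[A,B]$, which is all I will actually use.

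The core of the argument is then purely combinatorial. I would fix an arbitrary partition $A=t_0<\cdots<t_n=B$. For each $r\in(0,1)$ the polygonal sum attached to this partition is dominated by the full length of the approximating curve,
$$\sum_{j=1}^{n}|\Phi(re^{it_j})-\Phi(re^{it_{j-1}})|\leq \textrm{length of }\Phi\{re^{it}:A\leq t\leq B\}.$$
Letting $r\to1^-$ and using pointwise convergence at the finitely many nodes $t_0,\dots,t_n$, the left-hand side tends to $\sum_{j=1}^{n}|\Phi(e^{it_j})-\Phi(e^{it_{j-1}})|$, so taking $\liminf$ on the right gives
$$\sum_{j=1}^{n}|\Phi(e^{it_j})-\Phi(e^{it_{j-1}})|\leq \liminf_{r\to1^-}\textrm{length of }\Phi\{re^{it}:A\leq t\leq B\}.$$
Since the right-hand side is independent of the partition, I would finish by taking the supremum over all partitions $P$ on the left, which by the definition above equals the length of $\Phi\{e^{it}:A\leq t\leq B\}$, yielding the asserted inequality.

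There is essentially no obstacle here. The inequality is one-directional precisely because passing to the supremum over partitions commutes with the limit only in the favourable direction, and no rectifiability of the limit curve is needed: if the right-hand side is $+\infty$ the statement is vacuous, and otherwise the bound automatically forces finiteness of the boundary length. The only point deserving a word of care is that uniform convergence of $\Phi(re^{it})$ to $\Phi(e^{it})$ is in fact more than required --- pointwise convergence at the partition nodes already suffices --- but invoking the Osgood--Caratheodory extension makes even the uniform statement immediate.
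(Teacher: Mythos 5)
Your proof is correct and follows essentially the same route as the paper's: both bound the polygonal sum for a fixed partition by the length of the approximating curve $\Phi\{re^{it}:A\leq t\leq B\}$, pass to the limit using the pointwise convergence $\Phi(re^{it})\to\Phi(e^{it})$ guaranteed by the Osgood--Caratheodory extension, and then take the supremum over partitions. The only cosmetic difference is that the paper works with a sequence $r_n\to 1^-$ realizing a bound $M$ on the lengths, whereas you take the $\liminf$ directly; the arguments are otherwise identical.
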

\begin{proof}
\tab Let $r_n<1, r_n\rightarrow 1$ and M be such that $$length\Phi\{r_n e^{it}: A\leq t \leq B\}\leq M$$ for all n. Then we will show that length of $\Phi\{e^{it}: A\leq t \leq B\}\leq M$. It suffices to prove that 
$$\sum_{y=0}^{N-1}|\Phi(e^{it_{y+1}})- \Phi(e^{it_{y}})|\leq M $$ for any partition $t_0= A< t_1 <....< t_{N-1}< t_N=B$.\\
But $$\sum_{0}^{N-1}|\Phi(r_n e^{it_{y+1}})- \Phi(r_n e^{it_{y}})|\leq $$ $$length \Phi\{r_n e^{it}: A \leq t \leq B\}\leq M.$$ Since $\Phi(r_n e^{it})\rightarrow \Phi(e^{it}), n\rightarrow +\infty$, passing to the limit we obtain $\sum_{0}^{N-1}|\Phi(e^{it_{y+1}})- \Phi (e^{it_{y}})|\leq M$. The result easily follows. 

\end{proof}
\begin{corollary} 
Under the above assumptions and notations we have the following:
\begin{enumerate}
\item \tab If $length\Phi(\{e^{it}: A\leq t \leq B\})= + \infty$, then $$length\Phi(\{e^{it}: A \leq t \leq B\})= \lim_{r\rightarrow 1^{-}} length\Phi(\{re^{it}: A\leq t \leq B\})$$ .

\item \tab If there exists $A^{\prime}, B^{\prime}, A^{\prime}<A<B < B^{\prime}$ such that  $length\Phi\{e^{it}: A^{\prime}\leq t\leq B^{\prime}\}< + \infty$, then  $$length\Phi\{e^{it}: A \leq t \leq B\}= \lim_{r\rightarrow 1^{-}} length \Phi\{re^{it}: A\leq t \leq B\}$$.
\end{enumerate}
\end{corollary}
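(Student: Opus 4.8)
The plan is to deduce both parts directly from Proposition 4.1 and from the Corollary to Theorem 3.1, since all of the analytic work has already been carried out there; the present statement is essentially a repackaging. Proposition 4.1 supplies, for every arc with $A<B<A+2\pi$ and with no rectifiability assumption, the lower-semicontinuity inequality
$$\text{length}\,\Phi\{e^{it}: A\leq t\leq B\}\leq\liminf_{r\to 1^-}\text{length}\,\Phi\{re^{it}: A\leq t\leq B\}.$$
In each case the only remaining issue is to promote this inequality to an equality and to verify that the $\liminf$ is a genuine limit.

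For part 1 the argument is short. If $\text{length}\,\Phi\{e^{it}: A\leq t\leq B\}=+\infty$, then the left-hand side of the inequality above equals $+\infty$, which forces $\liminf_{r\to 1^-}\text{length}\,\Phi\{re^{it}: A\leq t\leq B\}=+\infty$. Hence the $\limsup$ is also $+\infty$, the limit exists and equals $+\infty$, and this agrees with the (infinite) boundary length. So part 1 follows from Proposition 4.1 alone.

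For part 2 the strategy is to place ourselves in the hypotheses of Theorem 3.1 and to invoke part 4 of its Corollary. The assumption furnishes indices $A'<A<B<B'$ for which $\Phi\{e^{it}: A'\leq t\leq B'\}$ has finite length. I would first observe that this makes $J:=\Phi\{e^{it}: A'<t<B'\}$ a rectifiable open arc of the Jordan curve $\gamma=\Phi(\mathbb T)$ (deleting the two endpoints does not affect finiteness of the length) and that $J':=\Phi\{e^{it}: A\leq t\leq B\}$ is a compact subarc of $J$. With $(A,B)$ in the role of $(a,b)$ from Section 3, the Corollary to Theorem 3.1, part 4, then applies verbatim and yields
$$\text{length}\,J'=\lim_{r\to 1^-}\text{length}\,\Phi\{re^{it}: A\leq t\leq B\},$$
which is precisely the assertion of part 2.

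I do not anticipate a genuine obstacle, because the substantive content resides in Proposition 4.1 and in part 4 of the Corollary to Theorem 3.1. The single point that deserves care is the reduction carried out in part 2: one must confirm explicitly that a finite-length \emph{enlarged} arc legitimately provides an open rectifiable arc together with a compact subarc, so that Theorem 3.1 and its Corollary apply without any modification and with $(A,B)$ identified with $(a,b)$. Once this identification is made explicit, both parts follow immediately.
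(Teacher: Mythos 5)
Your proposal is correct and follows exactly the route the paper intends: the paper offers no detailed argument, stating only that the corollary ``follows easily from the previous results,'' and your write-up supplies precisely that derivation --- Proposition 4.1 forcing the $\liminf$ (hence the limit) to be $+\infty$ in part 1, and part 4 of the Corollary to Theorem 3.1 applied to the rectifiable open arc $\Phi\{e^{it}: A'<t<B'\}$ with compact subarc $\Phi\{e^{it}: A\leq t\leq B\}$ in part 2. Your explicit check that the enlarged finite-length arc legitimizes the identification of $(A,B)$ with $(a,b)$ is exactly the detail the paper leaves to the reader.
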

The proof of the corollary 4.1.1 follows easily from the previous results. \\
We believe that it is possible to have:
 $$length\Phi\{e^{it}:A\leq t \leq B\}< + \infty$$
and 
 $$length\Phi\{e^{it}:A\leq t \leq B\}
< \liminf_{r\rightarrow 1^-}length\Phi\{re^{it}: A\leq t \leq B\}
<$$ $$<\limsup_{r\rightarrow 1^-} length  \Phi\{re^{it}:A\leq t \leq B\}$$
but we do not have an example. A candidate for such an example is the Jordan domain $$\Omega=\{x+iy: -5<y<x \cos(1/x); 0<x<1\}\cup \{x+iy: -5< y< 0, -1<x\leq 0\}.$$\\
\tab Although $\int_{0}^{2\pi}|f^{\prime}(re^{it})|dt$ is increasing with respect to $r\in (0,1)$, we believe that this is no longer true for $\int_{a}^{b}|\Phi^{\prime}(re^{it})|dt$ and a candidate for a counter example is any convex polygonal domain $\Omega$.\\
\break
Finally, we have the following:
\begin{theorem}
Let $\Omega$ be a Jordan domain and $\Phi:D\rightarrow \Omega$ a Riemann map from the open unit disc $D$ onto $\Omega$. Let $A<B<A+2\pi$, then the following are equivalent.
\begin{enumerate}
\item \tab For every $ a,b $ such that  $ A<a<b<B $ the arc $\{\Phi(e^{it}):a\leq t\leq b \}$ is rectifiable.
\item \tab For every $a,b$ such that $A<a<b<B$ we have $$sup_{0<r<1}\int_{a}^{b}|\Phi^{\prime}(re^{it})|dt= M_{a,b}<\infty$$
\item \tab For every $a,b$ such that $A<a<b<B$ there exist curves \\$\gamma_r:[a,b]\rightarrow\mathbb{C}, 0<r<1$ such that $\lim_{r\rightarrow 1^-}\gamma_r(t)=\Phi(e^{it})$ for all $t\in[a,b]$ and such that the lengths of $\gamma_r$ are uniformly bounded as $r\rightarrow 1^-$, by a constant $C_{a,b}<\infty$.
\end{enumerate}
\end{theorem}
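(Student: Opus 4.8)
The plan is to prove the three-way equivalence by establishing the cyclic chain of implications $(1)\Rightarrow(2)\Rightarrow(3)\Rightarrow(1)$, drawing on Theorem 3.1, Corollary 3.1.1 and the Osgood--Caratheodory extension, since each individual link is short once these tools are in place.

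For $(1)\Rightarrow(2)$ I would fix $a,b$ with $A<a<b<B$ and introduce auxiliary values $a',b'$ with $A<a'<a<b<b'<B$. By hypothesis $(1)$ the compact arc $\{\Phi(e^{it}):a'\le t\le b'\}$ is rectifiable, so the open arc $J:=\{\Phi(e^{it}):a'<t<b'\}$ has finite length and is a rectifiable open arc of $\gamma=\partial\Omega$ which contains the compact subarc $J':=\{\Phi(e^{it}):a\le t\le b\}$. The pair $(J,J')$ now satisfies the hypotheses of Theorem 3.1, so Corollary 3.1.1(1) gives $\sup_{0<r<1}\int_a^b|\Phi'(re^{it})|\,dt<\infty$, which is precisely $(2)$.

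For $(2)\Rightarrow(3)$ the natural candidates are the images of the circular arcs, $\gamma_r(t):=\Phi(re^{it})$ for $t\in[a,b]$. Since $\frac{d}{dt}\Phi(re^{it})=ire^{it}\Phi'(re^{it})$, the length of $\gamma_r$ equals $\int_a^b r|\Phi'(re^{it})|\,dt\le\int_a^b|\Phi'(re^{it})|\,dt\le M_{a,b}$, so the lengths are uniformly bounded by $C_{a,b}:=M_{a,b}$. By the Osgood--Caratheodory theorem $\Phi$ extends to a homeomorphism of $\overline{D}$ onto $\overline{\Omega}$, whence $\gamma_r(t)=\Phi(re^{it})\to\Phi(e^{it})$ for every $t\in[a,b]$, yielding $(3)$. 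For $(3)\Rightarrow(1)$ I would argue by lower semicontinuity of length, exactly as in Proposition 4.1: for any partition $a=t_0<t_1<\cdots<t_N=b$ and any $r$ we have $\sum_{y=0}^{N-1}|\gamma_r(t_{y+1})-\gamma_r(t_y)|\le C_{a,b}$, and letting $r\to1^-$ with the pointwise convergence $\gamma_r(t)\to\Phi(e^{it})$ gives $\sum_{y=0}^{N-1}|\Phi(e^{it_{y+1}})-\Phi(e^{it_y})|\le C_{a,b}$. Taking the supremum over all partitions bounds the length of $\{\Phi(e^{it}):a\le t\le b\}$ by $C_{a,b}<\infty$, so the arc is rectifiable.

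The substantive content is concentrated entirely in $(1)\Rightarrow(2)$, and within it in the padding step: condition $(1)$ supplies rectifiability only of \emph{closed} subarcs, whereas Corollary 3.1.1 requires a rectifiable \emph{open} arc $J$ together with a compact subarc $J'$ sitting strictly inside it. Producing such a $J$ is what forces the introduction of $a',b'$, and verifying that the resulting open arc is genuinely rectifiable (its length being bounded by that of the closed arc $\{\Phi(e^{it}):a'\le t\le b'\}$) is the one place where care is needed. The remaining implications are soft, consisting of a single length computation combined with continuous extension, and the standard lower-semicontinuity argument for length, so the real work lies in correctly reducing $(1)$ to the hypotheses of the earlier theorem.
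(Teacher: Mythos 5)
Your proof is correct and follows essentially the same route as the paper: $(1)\Rightarrow(2)$ by reducing to Theorem 3.1 and Corollary 3.1.1, $(2)\Rightarrow(3)$ by taking $\gamma_r(t)=\Phi(re^{it})$, and $(3)\Rightarrow(1)$ by the partition argument of Proposition 4.1. Your explicit padding step with $a',b'$ is exactly the reduction the paper leaves implicit when it writes ``we have already seen that $1.\Rightarrow 2.$'', and spelling it out is a genuine improvement in rigor rather than a departure in method.
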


\begin{proof}
We have already seen that $ 1.\Rightarrow 2.$
 In order to see that $2.\Rightarrow 3.$ it suffices to set $\gamma_r(t)=\Phi(re^{it})$. Finally, to prove that $3.\Rightarrow 1.$, it suffices to prove that $$\sum_{j=0}^{n-1}|\Phi(e^{it_{j+1}})-\Phi(e^{it_j})|\leq C_{a,b}$$ for all partitions $a=t_0<t_1<...<t_{n-1}<t_n=b$. But
 $$\sum_{j=0}^{n-1}|\gamma_r(t_{j+1}) - \gamma_r(t_j)| \leq length\gamma_r\leq C_{a,b}$$ and $\lim_{r\rightarrow 1^-}\sum_{j=0}^{n-1}|\gamma_r(t_{j+1}) - \gamma_r(t_j)|=\sum_{j=0}^{n-1}|\Phi(e^{it_{j+1}})-\Phi(e^{it_j})|$ and the proof is completed. 
\end{proof}
\textbf{ Acknowledgement}: We would like to thank professor E.Katsoprinakis for his interest in this work.

{1}
\end{document}